\def\parmod{\parskip=2pt plus1pt minus1pt}
\renewcommand{\sectionmark}[1]
                    {\markboth{Kapitel \thesection\ #1}{}}
\renewcommand{\sectionmark}[1]
                 {\markright{} }
\numberwithin{equation}{section}
\newtheorem{theorem}{Theorem}
\newtheorem{lemma} {Lemma}
\newtheorem{corollary}[theorem]{Corollary}
\newtheorem{remark}[theorem]{Remark}
\renewenvironment{proof}
       {\pagebreak[2] \vspace{-1pt}{\bf Proof.}  }
      {\hfill $\blacksquare$ \vspace{2pt}}
\newcommand{\Aa}{\mathcal{A}}
\newcommand{\nat}{{\rm I\! N}}
\renewcommand{\r}{\right}
\newcommand{\gl}{\left\{}
\newcommand{\gr}{\right\}}
\newcommand{\kl}{\left(}
\newcommand{\kr}{\right)}
\newcommand{\limn}{\lim_{n\to\infty}}
\newcommand{\limsupn}{\limsup_{n\to\infty}}
\newcommand{\ti}{\widetilde}
\newcommand{\mi}{\setminus}
\newcommand{\abb}{\longrightarrow}
\renewcommand{\rho}{\varrho}
\renewcommand{\phi}{\varphi}
\renewcommand{\epsilon}{\varepsilon}
\def\Log{\operatorname{log}}
\newcommand{\cupdot}{\mathbin{\dot\cup}}
\newcommand{\bigcupdot}{\mathop{\dot\bigcup}}
\newenvironment{rem}
        {\pagebreak[2] \begin{remark} \quad \parmod \rm}
        {\end{remark}}
\newenvironment{lem}
        {\pagebreak[2] \begin{lemma} \quad \parmod \sl}
        {\end{lemma}}
\newenvironment{thm}
        {\pagebreak[2] \begin{theorem} \quad \parmod \sl}
        {\end{theorem}}
\newcommand{\beq}{\begin{equation}}
\newcommand{\eeq}{\end{equation}}
\newcommand{\beqar}{\begin{eqnarray}}
\newcommand{\eeqar}{\end{eqnarray}}
\newcommand{\beqaro}{\begin{eqnarray*}}
\newcommand{\eeqaro}{\end{eqnarray*}}
\newcommand{\bsat}{\begin{thm}}
\newcommand{\esat}{\end{thm}}
\newcommand{\bsatorig}{\begin{satOrig}}
\newcommand{\esatorig}{\end{satOrig}}
\newcommand{\blem}{\begin{lem}}
\newcommand{\elem}{\end{lem}}
\newcommand{\bkor}{\begin{corollary}}
\newcommand{\ekor}{\end{corollary}}
\newcommand{\bbew}{\begin{proofof}}
\newcommand{\ebew}{\end{proofof}}
\newcommand{\brem}{\begin{rem}}
\newcommand{\erem}{\end{rem}}
\begin{document}

\title{Estimates for probabilities of independent events and infinite series}
\author{J\"urgen Grahl and Shahar Nevo}
\date{\today}

\maketitle

\begin{abstract}
This paper deals with (finite or infinite) sequences of arbitrary
independent events in some probability space. We find sharp lower
bounds for the probability of a union of such events when the sum of
their probabilities is given. The results have parallel meanings in
terms of infinite series. 
\end{abstract}

{\bf Keywords:} Probability space, independent events, Bonferroni
inequalities, Borel-Cantelli lemma, infinite series

{\bf Mathematics Subject Classification:} 60-01, 60A05, 97K50, 40A05

\section{Introduction}

This paper deals with (finite or infinite) sequences of arbitrary
independent events in some probability space $(\Omega,\Aa,P)$. In particular,
we discuss the connection between the sum of the probabilities of these
events and the probability of their union. Naturally, the results can
be formulated both in the ``language'' of probability and the
``language'' of calculus of non-negative series. 

This paper is written in an expository and to some extent educational
style. Part of the results (in particular in sections 2 and 3) are
basically known, one of them being more or less equivalent to the
Borel-Cantelli lemma. We hope that our approach, emphasizing the
connections to calculus, will be of interest in itself.


In Section 2 we start with a lemma/construction that shows that for
each sequence $\{x_n\}_{n=1}^\infty$ of real numbers $x_n\in[0;1)$
there is a sequence of independent events $\{A_n\}_{n=1}^\infty$ in
a suitable (quite simple) probability space $(\Omega,\Aa,P)$ such that
$P(A_n)=x_n$ for all $n\ge 1$. 

In Section 3 we discuss the connections between the convergence of
series of independent events and the probability of the union of these
events. In particular, we give an extension of the inclusion-exclusion
principle to the case of infinitely many events. 

In Section 4 we determine a sharp lower bound for the probability of a
union of independent events when the sum of the probabilities is given
and, vice versa, a sharp upper bound for the sum of the probabilities when
the probability of the union is given. 

\section{The correspondence between sequences of independent events
  and non-negative series} 

Throughout this paper, let $(\Omega,\Aa,P)$ be a probability space,
i.e. $\Omega$ is an arbitrary non-empty set, $\Aa$ a $\sigma$-algebra
of subsets of $\Omega$ (the sets considered to be measurable
w.r.t. $P$) and $P:\Aa\abb[0;1]$ a probability measure. 

Let us first recall that infinitely many events $A_1,A_2,\ldots\in\Aa$ are {\bf
  (mutually) independent} if and only if 
$$P\kl \bigcap_{\ell=1}^{k} A_{i_\ell}\kr
=\prod_{\ell=1}^{k} P\kl A_{i_\ell}\kr \qquad
\mbox{ whenever } \; 1\le i_1<i_2<\ldots<i_k.$$ 
In the following we frequently make use of the fact that independence
isn't affected if one or several events are replaced by their
complements (with respect to $\Omega$). We denote the complement of an
event $A$ by $A^c$, i.e. $A^c:=\Omega\mi A$.  

We begin with a lemma that gives a full correspondence between series
with non-negative terms (and less than 1) and sequences of independent
events. 

\begin{lemma}\label{Corresp}
If $\{x_n\}_{n=1}^\infty$ is a sequence of real numbers $x_n\in
[0;1]$, then there exist a probability space $(\Omega,\Aa,P)$ and a
sequence $\gl A_n\gr_{n=1}^{\infty}$ of independent events $A_n\in
\Aa$ such that $P(A_n)=x_n$ for all $n$. 
\end{lemma}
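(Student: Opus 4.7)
The plan is to construct a concrete probability space on the unit interval and carve out the events $A_n$ by a nested interval subdivision. Take $\Omega:=[0,1)$, let $\Aa$ be the Borel $\sigma$-algebra on $[0,1)$, and let $P$ be Lebesgue measure; the events $A_n$ will be built recursively as finite unions of half-open intervals.

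The construction proceeds by induction on $n$. Set $A_1:=[0,x_1)$, so that $P(A_1)=x_1$ and the atoms of $\sigma(A_1)$ are the intervals $[0,x_1)$ and $[x_1,1)$. Suppose $A_1,\ldots,A_{n-1}$ have already been defined in such a way that the atoms of $\sigma(A_1,\ldots,A_{n-1})$ form a partition of $[0,1)$ into $2^{n-1}$ half-open intervals $I_\varepsilon$, indexed by $\varepsilon=(\varepsilon_1,\ldots,\varepsilon_{n-1})\in\{0,1\}^{n-1}$, with
$$|I_\varepsilon|=\prod_{j=1}^{n-1}x_j^{\varepsilon_j}(1-x_j)^{1-\varepsilon_j}\quad\text{and}\quad A_j=\bigcup_{\varepsilon_j=1}I_\varepsilon\quad(1\le j\le n-1).$$
Now split each interval $I_\varepsilon=[a,b)$ at the point $a+x_n(b-a)$ and let $A_n$ be the union of all the left pieces produced. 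This preserves the inductive hypothesis at step $n$, the left piece of $I_\varepsilon$ becoming the new atom indexed by $(\varepsilon,1)$ and the right piece the one indexed by $(\varepsilon,0)$.

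To verify independence, for arbitrary indices $1\le i_1<\ldots<i_k$ I would take $n:=i_k$ and express $A_{i_1}\cap\cdots\cap A_{i_k}$ as the disjoint union of those atoms $I_\varepsilon$, $\varepsilon\in\{0,1\}^n$, that satisfy $\varepsilon_{i_\ell}=1$ for every $\ell$. Since the product formula for $|I_\varepsilon|$ factors across coordinates, summing the lengths collapses to $\prod_{\ell=1}^k x_{i_\ell}$, which is exactly the defining condition for mutual independence recorded in the paper. The degenerate values $x_n\in\{0,1\}$ cause no trouble: the split point then coincides with an endpoint, and the left piece is either empty or the whole interval.

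The main obstacle is purely notational, namely setting up the bookkeeping for the atoms $I_\varepsilon$ and the indexing by binary strings in a way that makes the inductive invariant transparent; once this is in place, the independence computation is mechanical. A conceptually shorter alternative would be to invoke the existence of the infinite product measure on $\{0,1\}^{\nat}$ with marginals giving mass $x_n$ to $1$ in the $n$-th coordinate, and to take $A_n$ to be the cylinder $\{\omega:\omega_n=1\}$; but the construction on $[0,1)$ has the virtue of being entirely elementary, which matches the expository spirit of the paper.
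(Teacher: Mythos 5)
Your proposal is correct and follows essentially the same route as the paper: a recursive construction in which the new event $A_n$ is obtained by carving out a fraction $x_n$ of each atom of the algebra generated by $A_1,\ldots,A_{n-1}$, with independence verified by summing the proportionally scaled atom measures. The only difference is cosmetic (half-open intervals in $[0,1)$ with an explicit product formula for the atom lengths, rather than the paper's finite unions of rectangles in the unit square), so the two arguments are the same in substance.
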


\begin{proof}
We can choose $\Omega:=[0,1]\times [0,1]\subseteq \mathbb R^2$,
 equipped with the Lebesgue measure $P$ on the $\sigma$-algebra $\Aa$
 of Lebesgue measurable subsets of $\Omega$. We construct  the desired
 sequence of sets/events $A_n$ by recursion. First, we take $A_1$
 to be the empty set $\emptyset$ if $x_1=0,$ and if $x_1>0$ then we
 take $A_1$ to be a rectangle contained in $\Omega$, with its sides
 parallel to the axes and with area $x_1$. (Here and in the following
 it doesn't matter whether we take open or closed rectangles since
 their boundaries form a null set anyway.)

Suppose we have already defined events $A_1, A_2,\ldots,A_n$ such that
\begin{itemize}
\item[(1)] 
$P(A_k)=x_k$ for $k=1,\ldots,n$, 
\item[(2)] 
the events $A_1,\ldots,A_n$ are independent and
\item[(3)] 
each $A_k$ ($k=1,\ldots,n$) is a finite union of rectangles with sides
parallel to the axes. 
\end{itemize}
Then if if $x_{n+1}=0$ we define $A_{n+1}:= \emptyset$. If
$x_{n+1}>0$, then  for every $k\in\gl1,\ldots,n-1\gr$ and for $1\le
i_1<i_2<\ldots<i_k\le n$  
we define
$$B_{i_1,i_2,\ldots,i_k}:=\kl \bigcap_{\ell=1}^k A_{i_\ell}\kr\setminus
    \bigcup_{\substack{ 1\le j\le n\\ j\ne i_1,i_2,\ldots,i_k}}A_j,$$ 
i.e. $B_{i_1,i_2,\ldots,i_k}$ consists of those points in $\Omega$ that
    are contained in all $A_{i_\ell}$, but not in any other $A_j$. 
Furthermore, for $k=0$ we define
$$B_{\emptyset}:=\Omega\setminus (A_1\cup A_2\cup\ldots\cup A_n).$$
In this way, we get a decomposition of $\Omega$ to $2^n$ pairwise disjoint sets, 
$$\Omega=\bigcupdot_{ \substack{1\le k\le n\\ 1\le
    i_1<i_2<\ldots<i_k\le n}}B_{i_1,i_2,\ldots,i_k}\cup B_\emptyset.$$
For simplicity, let us re-write this as $\Omega=C_1\cupdot
C_2\cupdot\ldots\cupdot C_{2^n}$ where each $C_\ell$ is one of the sets
$B_{i_1,i_2,\ldots,i_k}$ or $B_\emptyset$. (The exact order is not
important.)

Each $C_j$ is a finite union of rectangles with sides parallel to
the axes. For each $j$ we can construct a set $\ti{C}_j\subset
C_j$ which is a union of rectangles with sides parallel to the axes
and with area $P\kl\ti{C}_j\kr=x_{n+1}\cdot P(C_j)$.  

Now we define $A_{n+1}:=\ti{C}_1\cupdot \ti{C}_2\cupdot\ldots\cupdot
\ti{C}_{2^n}$. From the construction it is obvious that $P(A_{n+1})=
x_{n+1}$ and that $A_{n+1}$ is a finite union of rectangles with
its sides parallel to the axes. It remains to show that
$A_1,\ldots,A_{n+1}$ are independent, more precisely that 
$$P\kl A_{n+1}\cap\bigcap_{\ell=1}^{k} A_{i_\ell}\kr
=P(A_{n+1})\cdot\prod_{\nu=1}^{k} P\kl A_{i_\ell}\kr \qquad
\mbox{ whenever } \; 1\le i_1<i_2<\ldots<i_k\le n.$$ 
For this purpose we fix $i_1,\ldots,i_n$ with $1\le
i_1<i_2<\ldots<i_k\le n$. Then by our construction there are
$j_1,\ldots,j_r\in\gl1,\ldots,2^n\gr$ such that
$$\bigcap_{\ell=1}^{k} A_{i_\ell}=\bigcup_{\nu=1}^{r} C_{j_\nu}.$$
Here 
$$P\kl C_{j_\nu} \cap A_{n+1}\kr
=P\kl\ti{C}_{j_\nu}\kr
=x_{n+1}\cdot P\kl C_{j_\nu}\kr
=P(A_{n+1})\cdot P\kl C_{j_\nu}\kr,$$
and we obtain
\beqaro
P\kl A_{n+1}\cap\bigcap_{\ell=1}^{k} A_{i_\ell}\kr
&=& P\kl \bigcup_{\nu=1}^{r} (C_{j_\nu} \cap A_{n+1})\kr\\
&=& \sum_{\nu=1}^{r} P\kl C_{j_\nu} \cap A_{n+1}\kr\\
&=&P(A_{n+1})\cdot \sum_{\nu=1}^{r}  P\kl C_{j_\nu}\kr\\
&=&P(A_{n+1})\cdot P\kl \bigcup_{\nu=1}^{r} C_{j_\nu}\kr\\
&=&P(A_{n+1})\cdot P\kl \bigcap_{\ell=1}^{k} A_{i_\ell}\kr
=P(A_{n+1})\cdot\prod_{\nu=1}^{k} P\kl A_{i_\ell}\kr,
\eeqaro
as desired. In such a way, we can construct the required  infinite
sequence $\{A_n\}_{n=1}^\infty.$ 
 \end{proof}

Obviously this lemma is true also for a finite number of sets.

\section{The connection between the convergence of the series of
  probabilities and the probability of the union} 


We now turn to the situation that we will deal with for the rest of
this paper. We first introduce the following notation. 

{\bf Notation.} Let $\{x_n\}_{n=1}^\infty$ be a sequence of real
numbers $x_n\in [0;1)$. Then we set 
$$T_1:=x_1,\quad T_2:=x_2(1-x_1),\quad T_3:=x_3(1-x_1)(1-x_2), $$
and generally
$$T_n:=x_n(1-x_1)(1-x_2)\cdot\ldots\cdot (1-x_{n-1})=x_n\cdot\prod_{k=1}^{n-1} (1-x_k)
\qquad\mbox{ for all } n\ge 2.$$
The quantities $T_n$ have a probabilistic meaning: In view of Lemma
\ref{Corresp}, we can consider the $x_n$ as probabilities of certain
{\it independent} events $A_n$ in some probability space:
$x_n=P(A_n)$. Then we have 
\beq\label{MeaningTn}
T_n=P\kl A_n\mi \bigcup_{k=1}^{n-1} A_k\kr,
\eeq
i.e. $T_n$ is the probability that $A_n$, but none of the events
$A_1,\ldots,A_{n-1}$ happens. In the following this correspondence will
be very useful. 

We first collect some easy observations on the $T_n$.  

\brem\label{RemTn}
\begin{enumerate}
\item[(1)]  
For all $N\in\nat$
\beq\label{SumTn}
\sum_{n=1}^N T_n=1-(1-x_1)(1-x_2)\cdot\ldots\cdot(1-x_N).
\eeq
{\bf Proof 1.} This obviously holds for $N=1$, and if it is valid for 
some $N\ge1$, then we conclude that
$$\sum_{n=1}^{N+1} T_n
=  \sum_{n=1}^{N} T_n +T_{N+1}
=1-\prod_{n=1}^{N} (1-x_n) +x_{N+1}\prod_{n=1}^{N} (1-x_n)
=1-\prod_{n=1}^{N+1} (1-x_n) ,$$
so by induction our claim holds for all $N$. 

{\bf Proof 2.} (\ref{SumTn}) also follows from the probabilistic
meaning of the $T_N$: For every $N\in\nat$ we have in view of
(\ref{MeaningTn}) 
$$\sum_{n=1}^N T_n=P\left(\bigcup_{n=1}^NA_n\right)=1-P\left(\bigcap_{n=1}^N A_n^c
\right)=1-(1-x_1)(1-x_2)\ldots(1-x_N),$$
where the last equality holds since $A_1^c,A_2^c,\ldots,A_n^c$ are also
independent events. In other words, both sides of (\ref{SumTn}) denote
the probability that (at least) one of the events $A_1,\ldots,A_N$
happens. 
\item[(2)]  
From (\ref{SumTn}) we immediately obtain 
\beq\label{Tn-Altern}
T_N=x_N\kl 1-\sum_{n=1}^{N-1} T_n\kr \qquad \mbox{ for all } N\ge1.
\eeq
\item[(3)] 
For every $N\in\nat$ the map 
$$F:\mathbb R^N\to \mathbb R^N, \;
F(x_1,x_2,\ldots,x_N):=( T_1,T_2,\ldots,T_N)$$ 
is injective (though of course not surjective) and the inverse is given by 
 $F^{-1}(T_1,T_2,\ldots,T_N)=(x_1,x_2,\ldots,x_N)$ where 
$$x_1=T_1, \ x_2=\frac{T_2}{1-T_1}, \ x_3=\frac{T_3}{1-T_1-T_2
  },\ldots,  x_N=\frac{T_N}{1-T_1-T_2-\ldots-T_{N-1}}.$$  
Thus, we will often say $\{x_n\}_{n=1}^N$ and the ``corresponding''
  $\{T_n\}_{n=1}^N$ and vice versa. The above is also true for
  $N=\infty$ in an obvious manner. 
\item[(4)]  
If $\sigma$ is some permutation of  $\gl 1,\ldots,N\gr$ and $\ti{T}=\kl
\ti{T}_1,\ldots,\ti{T}_N\kr:=F(x_{\sigma(1)},\ldots,x_{\sigma(N)})$, then
$\sum\limits_{n=1}^N \ti{T}_n=\sum\limits_{n=1}^N T_n.$ This is an
immediate consequence from (\ref{SumTn}). 
\end{enumerate}
\erem

\begin{thm}\label{thm1}
If the $T_n$ are defined as above, then 
$$\sum_{n=1}^N T_n< 1 \quad\mbox{ for all } N\in\mathbb N
\qquad \mbox{and } \qquad 
\sum_{n=1}^\infty T_n\le 1.$$ 
Furthermore $\sum\limits_{n=1}^\infty T_n=1$ if and only if
$\sum\limits_{n=1}^\infty x_n=\infty $. 
\end{thm}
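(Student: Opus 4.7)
The plan is to reduce everything to the identity from Remark~\ref{RemTn}(1),
$$\sum_{n=1}^N T_n = 1 - \prod_{n=1}^N (1-x_n),$$
and then to the classical criterion relating an infinite product $\prod (1-x_n)$ to the convergence of $\sum x_n$.

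First I would observe that since $x_n \in [0,1)$ each factor $1-x_n$ lies in $(0,1]$, so the partial product $\prod_{n=1}^N(1-x_n)$ is a strictly positive real number. By the identity above this immediately yields $\sum_{n=1}^N T_n < 1$ for every $N$. Because $T_n \ge 0$, the sequence of partial sums is non-decreasing and bounded above by $1$, so $\sum_{n=1}^\infty T_n$ exists and is at most $1$.

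For the equivalence, passing to the limit $N\to\infty$ in the identity gives
$$\sum_{n=1}^\infty T_n = 1 - \prod_{n=1}^\infty (1-x_n),$$
so the claim reduces to showing that $\prod_{n=1}^\infty(1-x_n)=0$ if and only if $\sum_{n=1}^\infty x_n=\infty$. For the ``if'' direction I would use the elementary inequality $1-x\le e^{-x}$, valid for all $x\in\re$, which gives
$$0\le\prod_{n=1}^N(1-x_n)\le\exp\kl-\sum_{n=1}^N x_n\kr;$$
letting $N\to\infty$ forces the product to vanish whenever $\sum x_n$ diverges. For the converse I would argue by contraposition: assume $\sum x_n<\infty$. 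Then $x_n\to 0$, so from some index $n_0$ on we have $x_n\le\tfrac12$, and the standard estimate $-\log(1-x)\le 2x$ for $x\in[0,\tfrac12]$ yields $\sum_{n\ge n_0}-\log(1-x_n)\le 2\sum_{n\ge n_0}x_n<\infty$. Hence $\prod_{n\ge n_0}(1-x_n)>0$, and multiplying by the finitely many (strictly positive) early factors shows $\prod_{n=1}^\infty(1-x_n)>0$.

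No step here poses a serious obstacle; the mildly delicate point is the reverse direction, where one must first secure $x_n\to 0$ (automatic from $\sum x_n<\infty$) before applying a logarithmic comparison, since an estimate like $-\log(1-x)\le 2x$ fails uniformly on all of $[0,1)$. Everything else is a direct reading of the identity from Remark~\ref{RemTn}(1).
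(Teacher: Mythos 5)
Your proof is correct, but it follows a different route from the paper's main argument. The paper's primary proof never passes through logarithms or the infinite product criterion: for the direction ``$u:=\sum_{n=1}^\infty T_n<1 \Rightarrow \sum x_n<\infty$'' it uses the identity $T_n=x_n\kl 1-\sum_{k=1}^{n-1}T_k\kr\ge x_n(1-u)$ to get the quantitative bound $\sum_{n=1}^\infty x_n\le \frac{u}{1-u}$, and for ``$u=1\Rightarrow\sum x_n=\infty$'' it argues by contradiction, splitting off a tail with $\sum_{n>N}x_n\le\frac12$ and showing directly that then $\sum T_n\le T_1+\ldots+T_N+\frac12\kl 1-T_1-\ldots-T_N\kr<1$. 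Your argument instead reduces everything to the statement that $\prod_{n=1}^\infty(1-x_n)=0$ iff $\sum_{n=1}^\infty x_n=\infty$, which is exactly the paper's \emph{second} proof of the equality case, except that the paper simply cites the theory of infinite products (Ahlfors) for this equivalence, whereas you prove it from scratch via $1-x\le e^{-x}$ and $-\log(1-x)\le 2x$ on $[0,\tfrac12]$. What each approach buys: the paper's direct argument stays entirely within the $T_n$-calculus, avoids logarithms, and yields the explicit estimate $\sum x_n\le\frac{u}{1-u}$ as a by-product; your version is self-contained where the paper leans on a citation, and you correctly flag the one delicate point, namely that the logarithmic comparison needs $x_n$ bounded away from $1$ on the tail, which you secure from $x_n\to 0$.
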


\begin{proof}
$\sum\limits_{n=1}^N T_n< 1$ follows immediately from (\ref{SumTn}),
keeping in mind that $x_n<1$ for all $n$. Hence
$u:=\sum\limits_{n=1}^\infty T_n\le 1.$ 

If $u<1$, then we use that from (\ref{Tn-Altern}) we have
\begin{equation}\label{1}
T_n=x_n\kl 1-\sum_{k=1}^{n-1} T_k\kr\ge x_n(1-u) \qquad\mbox{ for all } n,
\end{equation}
which yields 
\begin{equation}\label{2}
\sum_{n=1}^\infty T_n\ge(1-u)\sum_{n=1}^\infty x_n, 
\qquad \mbox{ hence } \qquad
\sum_{n=1}^\infty x_n\le\frac{u}{1-u}<\infty.
\end{equation}
Suppose now that $u=1$. We want to show that $\sum\limits_{n=1}^\infty
x_n=\infty.$ Indeed, if $\sum\limits_{n=1}^\infty x_n<\infty,$ then
there exists an $N$ such that $\sum\limits_{n=1}^\infty x_{N+n}\le
\frac{1}{2}$, and we obtain
\beqaro
\sum\limits_{n=1}^\infty T_n
&=&T_1+\ldots+ T_N+\sum_{n=1}^\infty x_{N+n}(1-T_1-T_2-\ldots-T_{N+n-1})\\ 
&\le& T_1+\ldots+T_N+(1-T_1-T_2-\ldots-T_N)\cdot \frac{1}{2}<1
\eeqaro
since $T_1+\ldots+T_N<1$. This completes the proof of our Theorem. 

In the proof of the second statement (on the case of equality) we can also
argue as follows: Taking the limit $N\to\infty$ in (\ref{SumTn}) we obtain
$$\sum_{n=1}^\infty T_n=\lim_{N\to\infty}\sum_{n=1}^NT_n=1-\prod_{n=1}^\infty(1-x_n).$$
By the theory of infinite products \cite[p.~192]{Ahlfors}
$\sum\limits_{n=1}^\infty x_n<\infty$ is equivalent to
$\prod\limits_{n=1}^\infty(1-x_n)>0,$ hence to
$\sum\limits_{n=1}^\infty T_n<1$. 
\end{proof}


Continuing with this line of ideas, we can get the following estimate for 
$\sum\limits_{n=1}^\infty x_n $. 

\begin{thm}\label{thm2} 
If $\sum\limits_{n=1}^\infty x_n<\infty$ and
$u:=\sum\limits_{n=1}^\infty T_n<1,$ then  
$$\sum\limits_{n=1}^\infty  x_n<\Log \frac{1}{1-u},$$ and this
estimate is sharp.  
\end{thm}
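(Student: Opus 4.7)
The plan is to deduce the estimate directly from the product identity in part~(1) of Remark~\ref{RemTn} together with a single Taylor-series inequality for the logarithm. Letting $N\to\infty$ in~(\ref{SumTn}) and using that $\sum x_n<\infty$ forces the infinite product $\prod(1-x_n)$ to converge to a positive value (Ahlfors, as cited in the proof of Theorem~\ref{thm1}), I obtain
\[
1-u \;=\; \prod_{n=1}^\infty (1-x_n) \;>\; 0.
\]
Taking logarithms (both sides are finite and positive) then rewrites the target quantity as
\[
\Log\frac{1}{1-u} \;=\; \sum_{n=1}^\infty \bigl(-\Log(1-x_n)\bigr),
\]
so the task reduces to comparing this sum termwise with $\sum x_n$.

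For the comparison I would invoke the elementary Taylor expansion $-\Log(1-x)=x+x^2/2+x^3/3+\cdots$, which is strictly greater than $x$ for every $x\in(0,1)$. Under the hypothesis $u>0$ at least one $x_n$ must lie in $(0,1)$ (the degenerate case $u=0$ forces all $x_n=0$, in which the claimed strict inequality is vacuous), so summing the termwise inequality gives
\[
\sum_{n=1}^\infty x_n \;<\; \sum_{n=1}^\infty \bigl(-\Log(1-x_n)\bigr) \;=\; \Log\frac{1}{1-u},
\]
which is precisely the desired bound.

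For sharpness, fix $u\in(0,1)$ and, for each $N\in\nat$, consider the finite sequence
\[
x_1=x_2=\cdots=x_N := 1-(1-u)^{1/N}, \qquad x_n:=0 \;\text{ for } n>N.
\]
Then $\prod(1-x_n)=(1-u)^{N/N}=1-u$, so the associated value $\sum T_n$ is exactly $u$, while
\[
\sum_{n=1}^\infty x_n \;=\; N\bigl(1-(1-u)^{1/N}\bigr) \;\longrightarrow\; -\Log(1-u) \;=\; \Log\frac{1}{1-u}
\]
as $N\to\infty$, by the standard limit $N(1-a^{1/N})\to -\Log a$ for $a\in(0,1)$. Hence the bound is approached arbitrarily closely from below while the prescribed value of $u$ is realised exactly, which is sharpness in the natural sense.

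I do not anticipate any genuine obstacle: the structure is dictated by the product identity $\prod(1-x_n)=1-u$, and $-\Log(1-x)>x$ is immediate from its power series. The only care needed is to check that the extremal construction realises the prescribed $u$ \emph{exactly} (which the choice $\alpha_N=1-(1-u)^{1/N}$ arranges by design) rather than merely approximating it.
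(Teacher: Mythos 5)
Your proof is correct and follows essentially the same route as the paper: the identity $1-u=\prod_{n=1}^\infty(1-x_n)$ from (\ref{SumTn}), the termwise inequality $x<-\Log(1-x)$ for $x\in(0,1)$, and the same extremal sequence $x_1=\ldots=x_N=1-\sqrt[N]{1-u}$ with $N\to\infty$ for sharpness. Your explicit remark that strictness requires some $x_n>0$ is a small point the paper glosses over, but otherwise the two arguments coincide.
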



\begin{proof}
As in the proof of Theorem \ref{thm1}, from (\ref{SumTn}) we get
$$u=\sum\limits_{n=1}^\infty T_n=1-\prod\limits_{n=1}^\infty(1-x_n).$$
Using the well-known estimate $\Log(1+x)<x$ which holds for $-1<x\le 1$ we obtain
\beq\label{EstimSumXn}
\sum_{n=1}^\infty x_n<-\sum_{n=1}^\infty \Log (1-x_n)
=-\Log \prod_{n=1}^\infty(1-x_n)
=-\Log(1-u)=\Log \frac{1}{1-u}.
\eeq
In order to show the (asymptotic) sharpness of this estimate, we fix
some $u\in [0;1)$, and we choose the $x_n$ such that finitely many
of them have the same value and all others are zero. More precisely,
for given $N\in\nat$ we set  
$$x_n:=\gl\begin{array}{ll} 
1-\sqrt[N]{1-u} & \mbox{ for } n=1,\ldots,N,\\
0 & \mbox{ for } n>N.\end{array}\r.$$
Then 
$$\sum_{n=1}^\infty T_n
=1-\prod\limits_{n=1}^\infty(1-x_n)
=1-\prod\limits_{n=1}^N \sqrt[N]{1-u}=u$$
and 
$$\sum_{n=1}^\infty x_n
=\sum_{n=1}^N x_n
=N\left(1-\sqrt[N]{1-u}\right)\underset {N\to\infty}\longrightarrow \Log \frac{1}{1-u};$$
the latter limit is easily calculated by considering the derivative of
$g(x):= (1-u)^x$ at $x=0.$  

The sharpness of the estimate can also be seen by estimating the
error in the inequality $\Log(1+x)<x$ used above: From
(\ref{EstimSumXn}) and the Taylor expansion of the logarithm we obtain
\beqaro\label{3}
0<\Log \frac{1}{1-u}-\sum\limits_{n=1}^\infty x_n 
&=&-\sum_{n=1}^\infty[\Log (1-x_n)+x_n]\\
&=&\sum_{n=1}^\infty\left(\frac{x_n^2}{2}-\frac{x_n^2}{3}+\frac{x_n^2}{4}-\frac{x_n^2}{5}+\ldots\right)
<\sum_{n=1}^\infty\frac{x_n^2}{2}.
\eeqaro
If again $x_1,\ldots,x_N$ are all equal to $x$ and $x_n=0$ for all
$n>N$ (where of course $x$ depends on $N$, in order to ensure
$\sum_{n=1}^{\infty} T_n=u$), then 
$$N\cdot x= \sum\limits_{n=1}^{\infty} x_n<\Log \frac{1}{1-u},
\qquad \mbox{ hence }  \qquad 
x^2<\frac{(\Log \frac{1}{1-u})^2}{N^2}, $$ 
and we obtain  
$$0<\Log \frac{1}{1-u}-\sum\limits_{n=1}^\infty x_n 
\le \sum_{n=1}^\infty\frac{x_n^2}{2}
=\sum_{n=1}^N \frac{x^2}{2}<\frac{\left(\Log\frac{1}{1-u}\right)^2}{2N}.$$
This upper bound obviously tends to 0 if $N\to\infty$ which again
shows the sharpness of the result. 
\end{proof}

We will revisit the estimate in Theorem \ref{thm2} from a slightly
different point of view in the next section. 

We now want to give a probabilistic formulation of Theorems \ref{thm1}
and \ref{thm2}. In order to do so we recall that if the $x_n$ are the
probabilities of certain independent events $A_n$, then $T_n$ is the
probability of $A_n\mi \bigcup_{k=1}^{n-1} A_k$. Since
these sets are pairwise disjoint, we conclude that 
$$\sum_{n=1}^{N} T_n=P\kl\bigcup_{n=1}^{N} A_n\kr.$$
So the estimate $\sum\limits_{n=1}^N T_n\le \sum\limits_{n=1}^N x_n$
(a direct consequence of $T_n\le x_n$) is just a reformulation of the
trivial inequality $P\kl\bigcup_{n=1}^{N} A_n\kr\le \sum_{n=1}^{N} P\kl
A_n\kr$. In view of (\ref{SumTn}) it is also equivalent to the estimate 
$$1- \prod\limits_{n=1}^N(1-x_n)\le \sum\limits_{n=1}^N x_n$$
valid for all $x_n\ge 0$ which of course can also be proved by
an elementary induction.

The probabilistic meaning of the sum $\sum_{n=1}^{N} T_n$ also
carries over to the limit case $N\to\infty$. To see this, let us
recall some known facts from probability theory. 

If $\gl B_n\gr_{n\ge 1}$ is a sequence of subsets of $\Omega$, then we define
$$B_*=\liminf\limits_{n\to\infty} B_n:=\bigcup\limits_{n=1}^\infty\bigcap\limits_{k=n}^\infty B_k
\qquad \mbox{ and } \qquad 
B^*=\limsup\limits_{n\to\infty} B_n:=\bigcap\limits_{n=1}^\infty
\bigcup\limits_{k=n}^\infty B_k.$$
Obviously, we always have $B_*\subseteq B^*$. In  the case of equality
we write $\lim\limits_{n\to\infty}B_n:=B_*=B^*$. A sufficient condition
for $B_*=B^*$, hence for the existence of $\limn B_n$ is that the
sequence $\gl B_n\gr_{n\ge 1}$ is increasing 
($B_1\subseteq B_2\subseteq B_3\subseteq\ldots$) or decreasing
($B_1\supseteq B_2\supseteq B_3\supseteq\ldots$). When $\lim\limits_{n\to\infty}
B_n$ exists, then 
\beq\label{LimVertausch}
\lim_{n\to\infty} P(B_n)=P(\lim\limits_{n\to\infty} B_n)
\eeq
(see, for example \cite[p.~12]{Gut}).  

We apply this to our independent events $\{A_n\}_{n=1}^\infty.$ If
we set $B_N:=\bigcup_{n=1}^{N} A_n$, then $\lim_{N\to\infty} 
B_N=\bigcup\limits_{n=1}^\infty A_n$, hence 
$$P\left(\bigcup\limits_{n=1}^\infty A_n\right)
=P\kl\lim_{N\to\infty} B_N\kr
=\lim\limits_{N\to\infty} P(B_N)
=\sum_{n=1}^\infty T_n.$$ 

Now we can state Theorems \ref{thm1} and \ref{thm2} in terms of probability.

{\bf Theorem 1-P.} {\sl
Let $\{A_n\}_{n=1}^\infty$ be a sequence of independent events with
$P(a_n)<1$ for all $n\ge1.$ Then $P\left(\bigcup\limits_{n=1}^\infty
  A_n\right)<1$ if and only if $\sum\limits_{n=1}^\infty
P(A_n)<\infty.$ 
}

The direction ``$\Rightarrow$'' is reminiscent of the Borel-Cantelli
Lemma which can be stated as follows \cite[p.~96]{Bauer}: {\sl Let
  $\gl A_n\gr_{n=1}^\infty$ be a sequence of events.

\begin{itemize}
\setlength{\itemindent}{15pt}
\item[(BC1)] 
If $\sum_{n=1}^{\infty} P(A_n)<\infty$, then 
$$P\kl\limsupn A_n\kr=0.$$
\item[(BC2)] 
If $\sum_{n=1}^{\infty} P(A_n)=\infty$ and the events $A_n$ are
independent, then  
$$P\kl\limsupn A_n\kr=1.$$
\end{itemize}
}
Here the zero-one law due to Borel and Kolmogorov \cite[p. 47]{Bauer}
makes sure that for {\it independent} events $P\kl\limsupn A_n\kr$ has
either the value 0 or the value 1. 

In fact, (BC2) is an immediate
consequence of Theorem 1-P. Indeed, if $\sum_{n=1}^{\infty}
P(A_n)=\infty$, then also $\sum_{n=N}^{\infty} P(A_n)=\infty$ for all
$N\in\nat$, and if the events $A_n$ are independent, then Theorem 1-P
yields $P\left(\bigcup\limits_{n=N}^\infty A_n\right)=1$ for all
$N\in\nat$, so from (\ref{LimVertausch}) we deduce
$$P\kl\limsupn A_n\kr
=P\left(\bigcap_{N=1}^{\infty}\bigcup\limits_{n=N}^\infty A_n\right)
=\lim_{N\to\infty} P\left(\bigcup\limits_{n=N}^\infty A_n\right)=1.$$
For the sake of completeness we'd also like to remind the reader of
the short proof of (BC1): If $\sum_{n=1}^{\infty} P(A_n)<\infty$, then
for each given $\varepsilon>0$ there is an $N\in\nat$ such that
$\sum_{n=m}^{\infty} P(A_n)<\varepsilon$ for all $m\ge N$, hence
$P\left(\bigcup\limits_{n=m}^\infty A_n\right)<\varepsilon$ for all
$m\ge N$. Again in view of (\ref{LimVertausch}) this yields 
$$P\kl\limsupn A_n\kr
=\lim_{m\to\infty} P\left(\bigcup\limits_{n=m}^\infty A_n\right)
\le \varepsilon.$$
Since this holds for each $\varepsilon>0$, we conclude that
$P\kl\limsupn A_n\kr=0.$

{\bf Theorem 2-P.} {\sl
If $\{A_n\}_{n=1}^\infty$ is a sequence of independent events with
$P(A_n)<1$ for all $n\ge1$ and $u:=P\left(\bigcup\limits_{n=1}^\infty
  A_n\right)<1$, then
$$\sum\limits_{n=1}^\infty P(A_n)<\Log \frac{1}{1-u}.$$
}



Now let's consider for a moment only finitely many $x_n$, say
$x_1,\ldots,x_N$, and the corresponding $T_1,\ldots,T_N$. Expanding
(\ref{SumTn}) we obtain   
$$\sum_{n=1}^{N} T_n
=\sum_{n=1}^{N} x_n-\sum_{1\le i<j\le N} x_ix_j+\sum _{1\le i<j<k\le
  N} x_ix_jx_k+\ldots+(-1)^{N-1}x_1x_2\ldots x_N.$$
The probabilistic meaning of this identity is just the
inclusion-exclusion principle (here for the special case of
independent events): If once more we identify $x_n=P(A_n)$ where
$A_1,\ldots,A_N$ are independent events, then our identity takes the
form  
\beqar\label{InclExclPrin}
P\kl\bigcup_{n=1}^{N} A_n\kr
&=&\sum_{n=1}^{N} P(A_n)
-\sum_{1\le i<j\le N}  P(A_i\cap A_j) \\
&&+\sum _{1\le i<j<k\le N} P(A_i\cap A_j\cap  A_k)
+\ldots+(-1)^{N-1} P(A_1\cap\ldots\cap A_N). \nonumber
\eeqar
It is well-known that this identity (also in the general case of
non-independent events) gives rise to the so-called
Bonferroni inequalities (see, for example \cite{Galambos}), by
truncating it either after positive or after negative terms: 
$$\sum_{k=1}^{2r} (-1)^{k-1} S_k
\le P\kl\bigcup_{n=1}^{N} A_n\kr
\le \sum_{k=1}^{2r-1} (-1)^{k-1} S_k \qquad \mbox{ for all admissible }
r\ge 1,$$
where 
$$S_k:=\sum_{1\le j_1<j_2<\ldots<j_k\le N} P(A_{j_1}\cap\ldots\cap
A_{j_k}).$$

Our next theorem shows that the inclusion exclusion principle holds
also in the case of infinitely many independent events, i.e. that for
$N\to\infty$ all sums in (\ref{InclExclPrin}) are convergent.  


\begin{thm}\label{thm3}
If $\{A_n\}_{n=1}^\infty$ is a sequence of independent events with
$P(A_n)<1$ for all $n$ and $\sum\limits_{n=1}^\infty P(A_n)<\infty,$
then  
$$P\left(\bigcup_{n=1}^\infty A_n\right)
=\sum_{k=1}^{\infty} (-1)^{k-1} S_k
\qquad\mbox{ where } \qquad
S_k:=\sum_{1\le j_1<j_2<\ldots<j_k} P(A_{j_1}\cap\ldots\cap A_{j_k}).$$
\end{thm}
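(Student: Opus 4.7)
The plan is to reduce the claim to a statement about the elementary symmetric functions of the $x_n := P(A_n)$, and then to an expansion of the infinite product $\prod_{n=1}^\infty(1-x_n)$, which has already appeared in (\ref{SumTn}) and its limit.

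First I would use independence to rewrite each summand in $S_k$: since the $A_{j_1},\ldots,A_{j_k}$ are independent,
$$P(A_{j_1}\cap\ldots\cap A_{j_k})=x_{j_1}x_{j_2}\cdots x_{j_k},$$
so $S_k$ is exactly the $k$-th elementary symmetric function in the sequence $(x_n)_{n\ge 1}$. Writing $M:=\sum_{n=1}^\infty x_n<\infty$, the trivial bound
$$S_k\;\le\;\frac{1}{k!}\Bigl(\sum_{n=1}^\infty x_n\Bigr)^{\!k}=\frac{M^k}{k!}$$
shows that each $S_k$ is finite and that the series $\sum_{k\ge 1}(-1)^{k-1}S_k$ is absolutely convergent, with total absolute mass at most $e^M-1$.

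Next I would work at finite level $N$ and use the elementary polynomial identity
$$\prod_{n=1}^N (1-x_n)\;=\;\sum_{k=0}^N (-1)^k S_k^{(N)},\qquad S_k^{(N)}:=\!\!\sum_{1\le j_1<\ldots<j_k\le N}\!\!x_{j_1}\cdots x_{j_k}.$$
For each fixed $k$, $S_k^{(N)}\uparrow S_k$ as $N\to\infty$ (monotone in $N$, with the same uniform bound $M^k/k!$ as above). Hence the finite sums on the right admit a dominated-convergence passage to the limit, yielding
$$\lim_{N\to\infty}\sum_{k=0}^N (-1)^k S_k^{(N)}=\sum_{k=0}^\infty(-1)^k S_k=1-\sum_{k=1}^\infty(-1)^{k-1}S_k.$$
On the left, $\prod_{n=1}^N(1-x_n)\to \prod_{n=1}^\infty(1-x_n)$. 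Combining with the identity
$$P\Bigl(\bigcup_{n=1}^\infty A_n\Bigr)=1-\prod_{n=1}^\infty(1-x_n),$$
which was established in the discussion of (\ref{SumTn}) together with (\ref{LimVertausch}), finishes the proof.

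The only real obstacle is the justification of the interchange of the limit in $N$ with the summation in $k$; the uniform bound $S_k^{(N)}\le M^k/k!$ (valid for all $N$) together with $\sum_k M^k/k!=e^M<\infty$ gives the required dominating summable majorant, so dominated convergence for the counting measure on $\mathbb N_0$ applies directly. Everything else is bookkeeping via the already-proved product formula for $P(\bigcup A_n)$.
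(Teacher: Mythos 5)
Your proof is correct, and it differs from the paper's argument in two respects worth noting. Where you control the tails of $\sum_k(-1)^{k-1}S_k$ by the factorial bound $S_k\le M^k/k!$ (with $M=\sum_n x_n$), the paper instead uses the recursive estimate $S_N\le S_{N-1}\sum_{n\ge N}x_n$, which gives geometric decay $S_N=o(q^N)$ for every $q\in(0;1)$; both yield absolute convergence, yours with the cleaner explicit majorant $e^M-1$, the paper's with the slightly sharper asymptotic information $S_N/S_{N-1}\to 0$. More substantially, your passage to the limit is different: you start from the finite identity $\prod_{n=1}^N(1-x_n)=\sum_{k=0}^N(-1)^kS_k^{(N)}$ and let $N\to\infty$ using dominated convergence for the counting measure (legitimate, since $S_k^{(N)}=0$ for $k>N$, $S_k^{(N)}\uparrow S_k$ for each fixed $k$, and $S_k^{(N)}\le M^k/k!$ is a summable majorant independent of $N$), whereas the paper argues by rearrangement: having absolute convergence of $\sum_k(-1)^{k-1}S_k$ and nonnegativity of all terms inside each $S_k$, it asserts that the expansion of $1-\prod_{n=1}^\infty(1-x_n)$ may be re-summed in any order. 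Your dominated-convergence route is arguably the more carefully justified of the two, since it never manipulates the doubly infinite expansion directly; both rest on the same foundation, namely $P\bigl(\bigcup_{n=1}^\infty A_n\bigr)=1-\prod_{n=1}^\infty(1-x_n)$, which the paper provides via $\sum_n T_n$ and (\ref{LimVertausch}).
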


\begin{proof}
Setting $x_n:=P(A_n)$, we can write $S_k$ as 
$$S_k=\sum\limits_{1\le j_1<j_2<\ldots<j_k} x_{j_1}x_{j_2}\ldots
x_{j_k}\, .$$

Since in any product of the form $x_{j_1}x_{j_2}\ldots x_{j_N},$ 
there is at least one $j_i$ with $j_i\ge N_,$ and $S_{N-1}$ is the
sum of all possibilities of products of $N-1$ different $x_i$'s, we
have the estimate  
\beq\label{QuotKrit}
S_N\le S_{N-1}\sum\limits_{n=N}^\infty x_n \qquad\mbox{ for all }
N\ge2.
\eeq
Now, since $\sum\limits_{n=1}^\infty x_n$ converges, for every $q\in
(0;1)$ we have $\sum\limits_{n=N}^\infty x_n< \frac{q}{2}$ for large
enough $N$, say for $N\ge N_0$. Inserting this into (\ref{QuotKrit})
yields  $S_N\le S_{N_0} \cdot\kl\tfrac{q}{2}\kr^{N-N_0}$, hence
\begin{equation}\label{5}
\lim_{N\to\infty}\frac{S_N}{q^N} =0 \qquad\mbox{ for all } q\in(0;1).
\end{equation}
(In fact, when infinitely many $x_n$'s are different from zero, then
$S_N\ne 0$ for every $N$, and we obtain even
$\lim\limits_{N\to\infty}\frac{S_N}{S_{N-1}}=0$.)  

In view of the convergence of the geometric series
$\sum\limits_{k=1}^{\infty} q^n$ this shows that the sum
$\sum\limits_{k=1}^{\infty} (-1)^{k-1} S_k$ is 
absolutely convergent. Hence, since all terms in $S_N$ have the same
(non-negative) sign, it follows that also the series obtained by
expanding all the products in the series 
$$\sum\limits_{n=1}^\infty T_n = \sum_{n=1}^{\infty}
\kl1-(1-x_1)(1-x_2)\ldots(1-x_n)\kr$$ 
is absolutely convergent, and thus in any order of summation it has
the same value. This proves our theorem. 
\end{proof}

\section{Upper and lower bounds for probabilities}

Let $N\in\mathbb N$ and $T_n$ be as above. We consider
the extremal problems to determine  
\begin{equation}\label{6} 
U_N(s):=\inf\left\{\sum_{n=1}^N T_n:\sum_{n=1}^Nx_n=s, 0\le x_1,\ldots,x_N\le 1\right\} 
\qquad \mbox{ for }  0\le s\le N 
\end{equation}
and 
$$S_N(u):=\sup\left\{\sum_{n=1}^N x_n:\sum_{n=1}^N T_n=u,0\le x_1,\ldots,x_N\le 1\right\}
\qquad \mbox{ for }  0\le u\le 1.$$ 
The infimum in the definition of $U_N(s)$ is in fact a minimum, since 
\beq\label{SumTn2}
T_1+\ldots+T_N=1-(1-x_1)(1-x_2)\ldots(1-x_N)
\eeq 
is a continuous function of $x_1,\ldots,x_N$ which is evaluated on the
compact set $\gl (x_1,\ldots,x_N)\in [0;1]^N: \sum_{n=1}^N x_n=s\gr$. A
similar reasoning shows that also the supremum in the definition of
$S_N(u)$ is a maximum. 

\bsat{}\label{QR-fin}
$$U_N(s)=1-\left(1-\frac{s}{N}\right)^N 
\qquad\mbox{ and } \qquad 
S_N(u)= U_N^{-1}(u)= N\cdot\kl 1-\sqrt[N]{1-u}\kr.$$
\esat

\begin{proof}
One might think of the method of Lagrange multipliers to calculate
$S_N(u)$ and $U_N(s)$, but (as sometimes in similar situations) it suffices
to apply the inequality between arithmetic and geometric means. It
shows that for all
$x_1,\ldots,x_N\in[0;1]$ with $\sum_{n=1}^Nx_n=s$ we have
$$\prod_{n=1}^{N} (1-x_n)\le \kl1- \frac{1}{N}(x_1+\ldots+x_N)\kr^N
=\kl1- \frac{s}{N}\kr^N,$$
with equality if and only if
$x_1=x_2=\ldots=x_N=\frac{s}{N}$. From this and (\ref{SumTn2}) we see 
$U_N(s)=1-\left(1-\frac{s}{N}\right)^N$. 

On the other hand, if $x_1,\ldots,x_N\in[0;1]$ satisfy $\sum_{n=1}^N
T_n=u\in [0;1]$, then by (\ref{SumTn2})
$$1-u=\prod_{n=1}^{N} (1-x_n)\le \kl1-\frac{1}{N}(x_1+\ldots+x_N)\kr^N,$$
again with equality if and only if all $x_n$ are equal, in which case
we have $x_1+\ldots+x_N=N\cdot\kl 1-\sqrt[N]{1-u}\kr$. This shows the
formula for $S_N(u)$. Obviously, $S_N= U_N^{-1}$. 
\end{proof}

\brem{} \label{Rem:QR}
\begin{itemize}
\item[(1)] 
$U_N(s)$ and $S_N(u)$ are strictly increasing functions of $s$ resp. of $u$,
while $U_N(s)$ is a decreasing and $S_N(u)$ an increasing function of
$N$.  

\begin{proof}
That $s\mapsto U_N(s)$ and $u\mapsto S_N(u)$ are increasing is trivial. 

Each $(x_1,\ldots,x_N)\in[0;1]^N$ with $\sum_{n=1}^Nx_n=s$ gives
rise to an $(x_1,\ldots,x_N,x_{N+1})\in[0;1]^{N+1}$ with 
$\sum_{n=1}^{N+1}x_n=s$ by setting $x_{N+1}:=0$, and the $T_1,\ldots,T_N$
corresponding to $(x_1,\ldots,x_N)$ and to $(x_1,\ldots,x_N,x_{N+1})$
are the same while $T_{N+1}=0$. Therefore the infimum in the definition of
$U_{N+1}(s)$ is taken over a superset of the set appearing in the
definition of $U_N(s)$, and we conclude that $U_{N+1}(s)\le U_N(s)$ for $N\ge
s$. A similar resoning shows that $N\mapsto S_N(u)$ is increasing. 

Of course, the monotonicity of $N\mapsto U_N(s)$ and $N\mapsto S_N(u)$
can also be verified by calculating the derivatives of
the functions $g(x):=x\Log\kl1-\frac{s}{x}\kr$ and $h(y):=y\cdot\kl
1-(1-u)^{1/y}\kr$ and showing that they are non-negative. 
\end{proof}
\item[(2)] 
In view of (1), the maximum of $U_N(s)$ over all $N\ge s $ is attained
at the first one, i.e., at $N=\lceil s\rceil$ (where $\lceil s\rceil$
denotes the smallest integer $\ge s$). Hence we have 
$$U(s):=\max_{N\ge s}U_N(s)=1-\left(1-\frac{s}{\lceil
    s\rceil}\right)^{\lceil s\rceil}.$$ 

So when the finite number of events is $N=\lceil s\rceil,$ the minimum
of the probabilities $P\left(\bigcup\limits_{n=1}^N A_n\right)$ (under
the restriction $\sum_{n=1}^{N} P(A_n)=s$) is the
highest. Also, we have $\lim\limits_{s\to\infty} U(s)=1.$ 
\end{itemize}
\erem



In an obvious way, we can extend the definitions of $S_N$ and $U_N$
also to the case $N=\infty$. We will show that we will obtain explicit
formulas for $S_\infty$ and $U_\infty$ by taking the limits of $S_N$
and $U_N$ for $N\to\infty$. 

First of all we note that for $s>0$ the infimum in the definition of
$U_\infty(s)$ is not a minimum. Indeed, suppose that $\sum\limits_{n=1}^\infty x_n=s$ and
$1-\prod\limits_{n=1}^\infty(1-x_n)=U_\infty(s).$ W.l.o.g. we can assume that
$x_1>0$. Then we replace $x_1$
by $\frac{x_1}{2},$ $\frac{x_1}{2},$ i.e. we create a new 
sequence $\{x_n'\}_{n=1}^\infty$ where $x_1'=x_2'=\frac{x_1}{2}$ and
$x_n'=x_{n-1}$ for $n\ge3.$ Then $\sum\limits_{n=1}^\infty x_n'=s$ and
$(1-x_1')(1-x_2')=\left(1-\frac{x_1}{2}\right)^2>1-x_1.$  Hence
$$1-\prod\limits_{n=1}^\infty(1-x_n')<1-\prod_{n=1}^\infty(1-x_n)=U_\infty(s),$$
and we get a contradiction. 

\bsat \label{QR-infin}
$$U_\infty(s)=1-e^{-s} 
\qquad \mbox{ and } \qquad 
S_\infty(u)=(U_\infty)^{-1}(u)=\Log\frac{1}{1-u}. $$ 
\esat

This formula for $S_\infty(u)$ gives also a new proof of Theorem
\ref{thm2}. 

\begin{proof}
Since the infimum in the definition of $U_{\infty}(s)$ is taken over a
larger set than for any $U_N(s)$ (cf. the proof of Remark \ref{Rem:QR}
(1)) and since $U_N(s)\underset {N\to\infty}\searrow 1-e^{-s},$ it is
clear that $U_\infty(s)\le 1-e^{-s}.$ 

Suppose that $U_\infty(s)<1-e^{-s}$ for some $s\ge 0$. Then for some
sequence $\{x_n\}_{n=1}^\infty$ with $\sum\limits_{n=1}^\infty x_n=s$
we have $u_0:=\sum\limits_{n=1}^\infty T_n<1-e^{-s}$. Here infinitely
many $x_n$ are positive since otherwise $U_N(s)$ would be a lower
bound for $\sum\limits_{n=1}^\infty T_n$ for sufficiently large $N$,
contradicting $U_N(s)\ge 1-e^{-s}>u_0$. 
.
We can choose $N_0$ so large that $N_0>s$ and $u_0<1-e^{-s_0}$ where
$s_0:=\sum\limits_{n=1}^{N_0}x_n<s$. We then have 
$$\sum_{n=1}^{N_0}T_n
\le\sum_{n=1}^\infty T_n
=u_0<1-e^{-s_0}<1-\left(1-\frac{s_0}{N_0}\right)^{N_0}=U_{N_0}(s_0).$$ 
This is a contradiction to the definition of $U_{N_0}(s_0)$. Hence
$U_\infty(s)=1-e^{-s}$. 

As we have mentioned already in the proof of Theorem \ref{thm2}, 
$S_N(u)=N\cdot\kl 1-\sqrt[N]{1-u}\kr\underset {N\to\infty} \nearrow
\Log\frac{1}{1-u}$, so a similar reasoning as for $U_\infty(s)$ shows
that $S_\infty(u)=\Log\frac{1}{1-u}=(U_\infty)^{-1}(u)$. 
\end{proof}

The functions $S_N,$ $U_N,$ $S_\infty,$ $U_\infty$ are plotted in
Figure \ref{fig:US}.

\begin{figure}[htb]
  \begin{center}
    \includegraphics[width=0.475\textwidth]{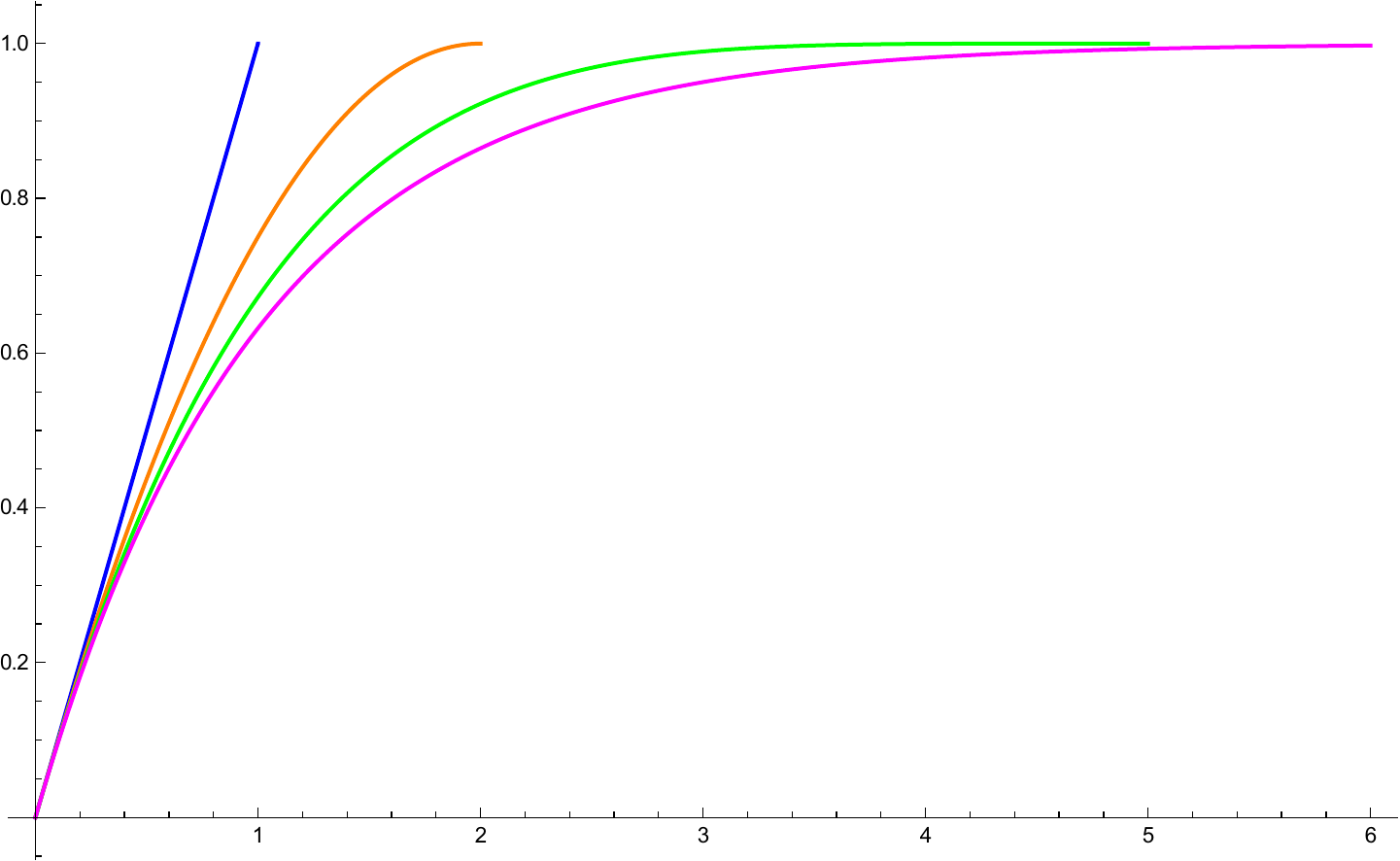}\hfill
    \includegraphics[width=0.475\textwidth]{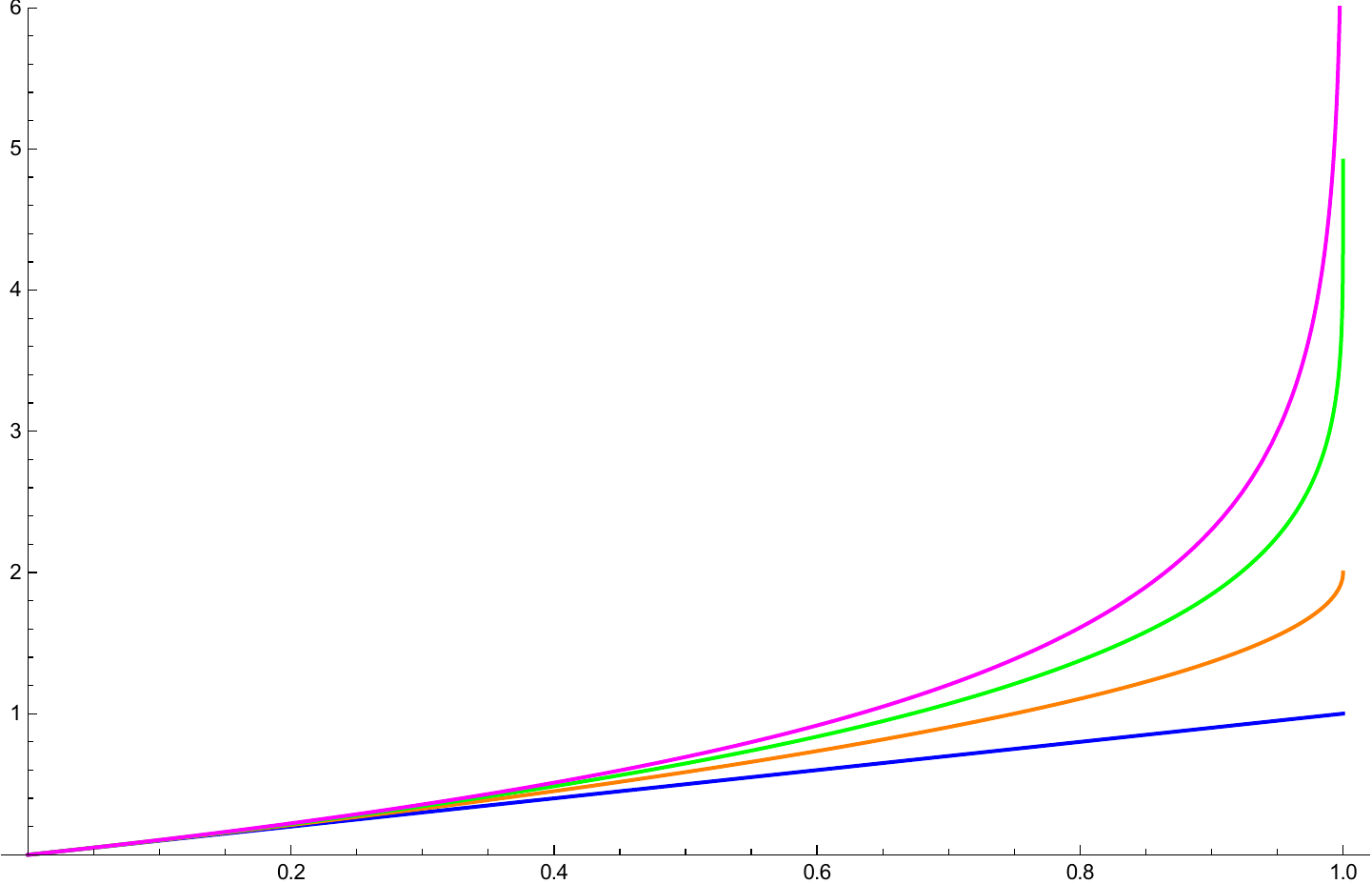}
    \caption{{\sf The graphs of $U_N(s)$ (left) and $S_N(u)$ (right)
        for $N=1,2,5,\infty$}}  
\label{fig:US}    
  \end{center}
\end{figure}

The results in Theorems \ref{QR-fin} and \ref{QR-infin} can be
reformulated in terms of probabilities of independent events: 

\bsat{} \label{mainresult}
Let $A_1,\ldots,A_N$ be finitely many independent events. Then 
\beqar \label{MainRes1fin}
P\kl\bigcup_{n=1}^{N} A_n\kr
&\ge& 1-\kl\frac{1}{N}\sum_{n=1}^{N} P(A_n)\kr^N,\\
\sum_{n=1}^{N} P(A_n) \label{MainRes2fin}
&\le& N\cdot\kl1-\sqrt[N]{1-P\kl\bigcup_{n=1}^{N} A_n\kr}\kr.
\eeqar
If $\gl A_n\gr_{n=1}^\infty$ is a sequence of independent
events such that $0<\sum_{n=1}^{\infty} P(A_n)<\infty$, then 
\beqaro
P\kl\bigcup_{n=1}^{\infty} A_n\kr
&>&1-\exp\kl-\sum_{n=1}^{\infty} P(A_n)\kr, \\
\sum_{n=1}^{\infty} P(A_n)
&<& \Log\frac{1}{1-P\kl\bigcup_{n=1}^{\infty}  A_n\kr}.
\eeqaro
All these estimates are best-possible. Equality in (\ref{MainRes1fin})
and (\ref{MainRes2fin}) occurs if $P(A_1)=\ldots=P(A_N)$.
\esat

The assumption in Theorem \ref{mainresult} that the events $A_n$ are
independent is essential as the following easy counterexample
demonstrates: Choose $A_1=\ldots=A_N$ to be one and the same event,
whose probability is $x=P(A_1)\in (0;1)$. Then the left hand side of
(\ref{MainRes1fin}) is $x$ while the right hand side is $1-x^N$ which
will be larger than $x$ if $N$ is sufficiently large. Similarly, the
left hand side of (\ref{MainRes2fin}) is $Nx$ while its right hand
side is $N(1-\sqrt[N]{1-x})$, so their quotient
$\tfrac{1-\sqrt[N]{1-x}}{x}$ will become arbitrarily small for
sufficiently large $N$.


At last we take a brief look at the extremal problems opposite to those
above, i.e. with supremum replaced by infimum and vice versa. 
Their solutions turn out to be quite simple. 

\bsat
For all $N\in\nat$ we have
$$\inf\left\{\sum_{n=1}^N x_n:\sum_{n=1}^N T_n=u,0\le x_1,\ldots,x_N\le  1\right\}
=u \qquad \mbox{ for }  0\le u\le 1,$$ 
$$\sup\left\{\sum_{n=1}^N T_n:\sum_{n=1}^Nx_n=s, 0\le x_1,\ldots,x_N\le  1\right\}
 =\min\gl s,1\gr \qquad \mbox{ for }  0\le s\le N,$$
and this remains valid analogously also for $N=\infty$.
\esat

\begin{proof} 
By the definition of the $T_n$ we always have $x_n\ge T_n,$
hence $\sum_{n=1}^N T_n\le \sum_{n=1}^N x_n$. Therefore the infimum is
at least $u,$ and the value $u$ is attained by taking only one event
with $x_1=T_1=u$ (and $x_n=0$ for $n\ge 2$). This shows the first
assertion.  

The very same reasoning applies to the case $s\le 1$ in the second
assertion. If $s>1,$ we can choose   $x_1=1$ and the other $x_n$ more
or less arbitrary, requiring only $\sum_{n=1}^Nx_n=s$. Then $T_1=1$ and
$T_n=0$ for all $n\ge 2$, hence $\sum_{n=1}^N T_n=1$ which is of
course the maximal value. This proves also the second assertion.
\end{proof}

{\bf Acknowledgment.} We would like to thank Professor Ely Merzbach
for his valuable advice.

\vspace{10pt}
\parbox{85mm}{\sl J\"urgen Grahl\\
University of W\"urzburg \\
Department of Mathematics  \\     
97074 W\"urzburg\\
Germany\\
e-mail: grahl@mathematik.uni-wuerzburg.de}
\hfill
\parbox{85mm}{\sl Shahar Nevo \\
Bar-Ilan University\\
Department of Mathematics\\
Ramat-Gan 52900\\
Israel\\
e-mail: nevosh@math.biu.ac.il}

\begin{thebibliography}{99}

\bibitem{Ahlfors} L.V. Ahlfors: \textit{Complex Analysis. An
    introduction to the theory of analytic functions of one complex
    variable}, 3rd edition, McGraw-Hill Book Company, D\"usseldorf 1979 

\bibitem{Bauer} H. Bauer: {\it Wahrscheinlichkeitstheorie}, 4th
  edition, Walter deGruyter, Berlin/New York 1991

\bibitem{Galambos} J. Galambos; I. Simonelli: {\it Bonferroni-type
    inequalities with applications}, Springer, New York 1996

\bibitem{Gut}  A. Gut: \textit{Probability: A Graduate Course},
  Springer, New York 2007. 
\end{thebibliography}
\end{document}